\renewcommand{\epsilon}{\varepsilon}
\newcommand{\PP}{{\mathbb P}}
\newcommand{\C}{{\mathbb C}}
\newcommand{\Z}{{\mathbb Z}}
\newcommand{\CP}{\C\PP}
\renewcommand{\phi}{\varphi}
\newtheorem{theo}{{Theorem}}[section]
\newtheorem{lem}[theo]{{Lemma}}
\newenvironment{example}{\medskip\noindent{\it Example:\/} }{\medskip}
\newenvironment{rem}{\medskip\noindent{\it Remark:\/} }{\medskip}
\newenvironment{Nota}{\medskip\noindent{\it Notation Conventions:\/} }{\medskip}
\newtheorem{defin}[theo]{{\sc Definition}}
\newenvironment{claim}{\medskip\noindent{\it Claim:\/} }{\medskip}
\title{A new method in the Jacobian Conjecture}
\author{Jingzhou Sun }
\address{Department of Mathematics, Stony Brook University, Stony Brook, NY
11777, USA} \email{jsun@math.stonybrook.edu}
\date{\today}
\begin{document}

\begin{abstract}
We introduce a new method in the attempt to prove the Jacobian conjecture. In the complex dimension 2 case, we apply this method to prove some new results related the Jacobian conjecture.
\end{abstract}

\maketitle

 \tableofcontents
 \section{Introduction}
 
The Jacobian conjecture has been a long-lasting open problem.
In the complex case, the conjecture simply asks the following question: A polynomial map from $\C^n$ to itself with non-vanishing Jacobian must be an automorphism? We refer the readers to \cite{bcw} and \cite{essen} for a very good introduction to the history and developments of this problem.  For simplicity, we will call a polynomial map  $F:\C^n\rightarrow \C^n$ a Jacobian map if it has non-vanishing Jacobian.

Even in the dimension 2 case, this conjecture is still open. 
In this paper, we focus ourselves on polynomial maps $F:\C^2\rightarrow \C^2$, although one should be able to extend the ideas in this article to higher dimensional cases.

Now given a Jacobian map $F$, it is well-known that there are several conditions that are equivalent to $F$ being an automorphism. We will only the following two equivalent conditions \cite{bcw}:
\begin{itemize}
\item $F$ is proper.
\item $F$ is birational.
\end{itemize}

 In this article we try to understand $F$ by looking at the picture at infinity. To be more precise, we first extend $F$ to be a rational map $F:\CP^2\dashedrightarrow \CP^2$. Then we resolve the indeterminacy by blow-ups with blow-up centers supported away from $\C^2$. We denote the resulted morphism $\tilde{F}: \widetilde{\CP^2}\rightarrow \CP^2$. We denote by $p: \widetilde{\CP^2}\to \CP^2$ the projection, and by $H_{\infty}=\CP^2\backslash \C^2$ the line at infinity. If we can describe $\widetilde{\CP^2}\backslash \C^2=p^{-1}(H_{\infty})$ well enough, we should be able to prove(or disprove) the properness of $F$. 

It is an easy conclusion that the Jacobian of $F$ is constant from the assumption of the Jacobian conjecture. The main observation of this paper is that this conclusion means that $F$ induces an isomorphism between the spaces of integrable functions on the domain and target space of $F$ respectively, with the Lebesgue measure on both sides. And this isomorphism should give us more information about the behavior at infinity.

Let $L$ denote the proper transform of $H_{\infty}$ under the projection $p$, and by $E_i$ the exceptional curves of $p$. Then we can draw a graph, with $L$ and the $E_i$'s as nodes, and an edge for each intersection of the curves corresponding to the nodes.  We will call such a graph as "the graph of $F$".

Associated to each $E_i$, we have an integer, called the pole order of $E_i$, which will be introduced in the next section. We then have the following theorem, describing the graph of $F$ when $F$ is an automorphism.

\begin{theo}\label{auto}
When $F$ is an automorphism, then the graph can be described as following: The blow-ups are strictly in order, namely we can strictly order the indices of the $E_i$'s according to the order of blow-ups, with $E_1$ being the strict transform of the exceptional divisor of the first blow-up and the blow-up which produces $E_{i+1}$ is centered in $E_i$. Moreover, let $E_m$ be the last exceptional curve, then $E_m$ has pole order $6$.
\end{theo}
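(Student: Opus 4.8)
\medskip
\noindent\emph{Proof strategy.} The plan is to combine the classical structure theory of $\mathrm{Aut}(\C^2)$ with an explicit study of the resolution at infinity. If $\deg F=1$ then $F$ is affine and $\bar F\in\mathrm{PGL}_3(\C)$, so there are no exceptional curves and nothing to prove; hence assume $\deg F\ge 2$ and take the resolution $p$ to be minimal. By the Jung--van der Kulk theorem, $F$ is a composition of affine automorphisms and elementary automorphisms, the latter, after an affine change of coordinates, of the form $e(x,y)=(x,\,y+q(x))$ with $k:=\deg q\ge 2$. An affine automorphism extends to an element of $\mathrm{PGL}_3(\C)$ and so contributes no blow-ups: composing $F$ with such a map only moves the indeterminacy point of $\bar F$ along $H_\infty$ and relabels the picture. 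So I would prove the theorem by induction on the number of elementary factors.

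For the base case I would resolve $\bar e$ directly. In homogeneous coordinates $\bar e=[XZ^{k-1}:YZ^{k-1}+Z^k q(X/Z):Z^k]$, and its only point of indeterminacy on $H_\infty$ is $[0:1:0]$. Blowing this up and iterating, one finds that the infinitely near points blown up form a single chain --- the center of the blow-up producing $E_{i+1}$ lying on $E_i$ --- that the process terminates exactly at the blow-up creating $E_m$, and that $\tilde e$ restricts to an isomorphism from $E_m$ onto the line at infinity of the target while contracting $L$ and all the other $E_i$; thus $E_m$ is the unique dicritical curve. The heart of the argument is the inductive step: assuming the resolution of a map $G$ has this shape with last (dicritical) curve $E_m(G)$, deduce the same for $e\circ G$. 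This amounts to understanding how the chain of infinitely near points, and the dicritical curve, transform under pre-/post-composition with an elementary map --- equivalently, how the base points of $\bar e$ and $\bar G$ on $H_\infty$ interact --- a delicate but essentially explicit local computation.

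Once this chain-with-dicritical-$E_m$ structure is established, the pole order of $E_m$ is forced to equal $6$. Indeed, the Jacobian hypothesis here says $\det DF$ is a nonzero constant $c$, i.e. $\tilde F^*(dx\wedge dy)=c\cdot p^*(dx\wedge dy)$ as meromorphic $2$-forms on $\widetilde{\CP^2}$; comparing divisors of the two sides (which brings in the ramification divisors of $p$ and of $\tilde F$) determines the pole orders along all the $E_i$. Along the dicritical curve $E_m$ --- which $\tilde F$ maps isomorphically onto the target line at infinity and which is not $\tilde F$-exceptional --- the object defining the pole order in Section~2 (which has pole order $6$ along the line at infinity) simply pulls back with pole order $6$. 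Alternatively, one reads off $6$ directly from the local model at the last blow-up.

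I expect the inductive step to be the main obstacle, since resolution of indeterminacy is not functorial under composition, and tracking the chain of blow-ups and the dicritical curve through composition with an elementary factor is where essentially all the work lies. A more intrinsic route --- avoiding Jung's theorem --- would exploit that $F^{-1}$ is also polynomial, so that $D:=p^{-1}(H_\infty)$ is at the same time the analogous boundary curve for $F^{-1}$ and can be contracted in two symmetric ways (by $p$ onto $H_\infty$, of self-intersection $+1$, and by $\tilde F$ onto the target line at infinity), together with the minimality of $p$ --- which forces $\tilde F$ to contract $L$ before any $p$-exceptional curve, so in particular $L^2=-1$ --- and one would argue that these constraints pin down $D$ as the chain described. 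In either approach the pole-order computation is essentially formal once the chain-plus-dicritical structure is in hand.
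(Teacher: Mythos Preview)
Your approach is viable in principle---indeed the paper explicitly remarks, right after its own proof, that one could argue via the Jung--van der Kulk structure theorem---but the paper's actual argument is dramatically shorter and avoids both Jung's theorem and any inductive tracking of infinitely near base points under composition.

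The paper proceeds as follows. Since $F$ is an automorphism, $\tilde F:\widetilde{\CP^2}\to\CP^2$ is a birational morphism; hence exactly one t-curve $E_f$ is mapped onto $H_\infty$, and with multiplicity $1$, while every other $E_i$ (and $L$) is contracted by $\tilde F$. Now use minimality of the resolution $p$: if some $p$-exceptional $(-1)$-curve were contracted by $\tilde F$, one could blow it down and still have a morphism, contradicting minimality. Thus every contracted $E_i$ has self-intersection $\le -2$, so among the $E_i$'s only $E_f$ can be a $(-1)$-curve. But in any sequence of point blow-ups the exceptional curves with self-intersection $-1$ are exactly the ``leaves'' (those on which no later center sits); having a unique leaf forces the blow-ups to form a single chain, with $E_f=E_m$ last. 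The pole order of $E_f$ then comes from Theorem~\ref{order1}(a): multiplicity $m=1$ yields pole order $4m+2=6$. This last step is essentially your Jacobian/pull-back argument.

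So the paper's route replaces your entire inductive step---which you correctly flag as the hard part---by one line of self-intersection bookkeeping. Your alternative ``intrinsic'' sketch at the end is closer in spirit, but the decisive observation you are missing is precisely: \emph{minimality of $p$ forces every $\tilde F$-contracted $E_i$ to have $E_i^2\le -2$, hence there is a unique $(-1)$-curve among the $E_i$, hence the blow-up tree is a chain.}
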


The converse is also true:

\begin{theo}\label{converse}
Let $F:\C^2\to\C^2$  be a polynomial map with non-vanishing Jacobian. If its graph satisfies the description of that for an automorphism in theorem \ref{auto}, then $F$ is an automorphism. In fact, we can weaken the condition by only requiring that $E_m$ has pole order  $\geq 0$, and the conclusion still holds.

\end{theo}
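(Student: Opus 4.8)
\medskip\noindent\emph{Proof strategy.}
By the two equivalences recalled in the introduction it is enough to show that $F$ is proper. Since every blow-up defining $p$ is centred away from $\C^2$, the open set $\widetilde{\CP^2}\sm p\inv(H_\infty)$ is canonically identified with $\C^2$, and $\tilde F$ restricts to $F$ there; as $\tilde F$ is a morphism of projective surfaces, one checks that $F$ is proper if and only if $\tilde F\inv(\C^2)=\C^2$, equivalently, if and only if \emph{every} irreducible component of $p\inv(H_\infty)=L\cup E_1\cup\cdots\cup E_m$ is mapped by $\tilde F$ into $H_\infty$. The mechanism I would use to detect this is the pole order: a positive pole order of a component $C$ of $p\inv(H_\infty)$ records a pole along $C$ of the pull-back $\tilde F^{*}(dx\wedge dy)$ of the standard volume form, and since $dx\wedge dy$ is holomorphic and nowhere zero on the target $\C^2$, such a pole is impossible if the generic point of $C$ is mapped into $\C^2$; hence $C$ has positive pole order $\implies \tilde F(C)\subseteq H_\infty$. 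So the plan is to pin down which components could possibly fail to go to infinity, and to invoke the hypothesis only where it is really needed.

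The curve $L$ never fails: $\tilde F^{*}(dx\wedge dy)$ equals, up to the constant nonzero Jacobian, $p^{*}(dx\wedge dy)$, and $p$ is a local isomorphism near the generic point of $L$, where $dx\wedge dy$ has a pole of order $3$ along $H_\infty$; so $L$ has positive pole order and the principle above applies. Next, among $E_1,\dots,E_m$ only the last curve $E_m$ can fail. Indeed, suppose $E_j$ is not mapped into $H_\infty$: then either $\tilde F$ maps $E_j$ onto a curve of the target meeting $\C^2$ (so that $E_j$ is \emph{dicritical}), or $\tilde F$ contracts $E_j$ to a point of $\C^2$. In either case the members of the linear system $\tilde F^{*}|\ocal_{\CP^2}(1)|$ have no base point on $E_j$; hence no subsequent blow-up is centred on $E_j$, and the strictly ordered structure of the resolution forces $j=m$. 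Therefore, under the hypotheses of the theorem, $F$ is an automorphism if and only if $\tilde F(E_m)\subseteq H_\infty$.

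It remains to rule out $\tilde F(E_m)\not\subseteq H_\infty$ when $E_m$ has pole order $\geq 0$. If $\tilde F$ contracts $E_m$ to a point of $\C^2$, then pulling back $dx\wedge dy$ by a map that collapses the divisor $E_m$ produces a form vanishing to order at least $1$ along $E_m$, so $E_m$ has strictly negative pole order, contrary to hypothesis. The remaining case --- $E_m$ dicritical --- is the crux. Here the same local computation yields only that $\tilde F^{*}(dx\wedge dy)$ is holomorphic along $E_m$, i.e. that the pole order of $E_m$ is $\le 0$, hence equal to $0$, and that $\tilde F$ is unramified along the generic point of $E_m$ (equivalently, $F$ already restricts, near this part of infinity, to a local isomorphism onto an open subset of the target $\C^2$ --- which is exactly why $F$ fails to be proper). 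To exclude this borderline configuration one must use the sequential chain quantitatively. For each $i$ let $m_i$ be the multiplicity of $E_i$ in the total transform $p^{*}H_\infty=L+\sum_i m_iE_i$ and $d_i$ the discrepancy of $E_i$; the location of the $i$-th centre (on $E_{i-1}$, and possibly also on $L$ or on $E_{i-2}$) gives an explicit recursion for the pairs $(m_i,d_i)$, hence for the pole orders along the chain. On the other hand, for a general line $\ell$ of the target, $\deg F=(\tilde F^{*}\ell)^2=d^{2}-\sum_i\mu_i^{2}$, where $d$ and the $\mu_i$ are the degree and the multiplicities at the successive centres of the linear system defining $\tilde F$, and the $\mu_i$ are constrained by the $m_i$ through the structure of the divisor $\tilde F^{*}H_\infty$. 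Feeding ``$E_m$ dicritical and of pole order $0$'' into these relations should force $\deg F=1$, so that $F$ is birational, hence an automorphism. The heart of the proof --- and its main obstacle --- is precisely this last quantitative step: showing that a strictly ordered chain arising as the resolution of a Jacobian map cannot terminate in a dicritical curve of pole order $0$. Everything preceding it is formal.
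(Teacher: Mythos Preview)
Your reduction is sound up to the point where you isolate the borderline case ``$E_m$ dicritical with pole order $0$,'' but from there the proposal diverges from the paper in a way that leaves a genuine gap.

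First, a remark on the intermediate step. Your claim that only $E_m$ can fail to map into $H_\infty$ is not cleanly justified: even if $E_j$ is dicritical, the linear system at stage $j$ can still have a base point at $E_j\cap E_{j-1}$ (a point of $E_j$ lying on a component that \emph{is} in the fixed part), so a further blow-up centred on $E_j$ is not excluded by your argument. The paper sidesteps this by using the pole-pair lemma (Lemma~\ref{pair}) to show that once $E_m$ has pole order $\ge 0$, \emph{every} $E_i$ has pole order $\ge 0$; it then treats all potentially dicritical $E_i$ uniformly rather than trying to show there is only one.

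Second, and more seriously, the endgame you sketch---feeding the chain data into the relation $\deg F=d^2-\sum\mu_i^2$ and hoping this forces $\deg F=1$---is precisely where the proof lives, and you have not carried it out. The paper closes the argument by a completely different and much shorter route. By Theorem~\ref{order2}(a), a dicritical curve of pole order $p$ has multiplicity $m=(2-p)/2$; with $p=0$ this gives $m=1$, i.e.\ $\tilde F$ is unramified at the generic point of every dicritical component. Combined with the nonvanishing Jacobian on $\C^2_X$, this means $\tilde F|_B:B\to\C^2_Y$ is a finite map unramified outside a finite set of points. But $\C^2$ minus finitely many points is simply connected, so the induced covering has degree $1$; hence $\tilde F$ is birational and $F$ is an automorphism. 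That is the missing idea: the pole-order hypothesis is converted, via Theorem~\ref{order2}, into a \emph{multiplicity-one} statement, and then topology (simple connectedness) replaces the intersection-theoretic computation you were attempting.
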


This theorem leads us to one direction to prove the Jacobian Conjecture: if we can show that the graph of any polynomial map of $\C^2$ to itself with non-vanishing Jacobian satisfy the condition described above, then we have affirmative answer to the Jacobian Conjecture for $\C^2$.

In section \ref{poleorders}, we will prove two technical lemmas: theorem \ref{order1} and theorem \ref{order2}, which we hope  will be helpful if one attempts to understand the graph better.

Along the same line of ideas, we are able to prove results in the following direction:

$\tilde{F}$  is a projective morphism, so by the Stein factorization theorem \cite{Hartshorne}, there exists a projective surface $Z$ such that we have the following commutative diagram

\begin{diagram}
\widetilde{\CP^2}& \rTo^{C} & Z\\
& \rdTo_{\tilde{F}}  & \dTo_{\pi}  \\
&      & \CP^2
\end{diagram}

\begin{Nota}
 Since the domain and target space of $F$ are both $\C^2$, in order to distinguish them, we use indices $X$ and $Y$ for the domain and target space respectively. So by $\C^2_X$ ($\C^2_Y$) we mean the domain (target space) of $F$. The same rule apply to the extensions of $F$.

\end{Nota}

We denote by $B=\pi^{-1}(\C^2_Y)$. So we always have $\C(\C^2_X)\subset B$. Since $C|_{\C^2_X}$ is injective, by abuse of notation, we write $\C^2_X\subset B$. Now, $F$ being proper is equivalent to $B=\C^2_X$. So it should be very useful if we can understand the open set $B$.  In this direction, we prove theorem \ref{singularity}. To state the theorem, first we need a definition.

\begin{defin}
A point on a normal surface is said to have $\tilde{A}_n$ singularity if the Dynkin diagram of the exceptional divisor of a minimal resolution is same as that of $A_n$ singularity, except that the self-intersection numbers are $\leq -2$ instead of being $-2$.
\end{defin}

\begin{theo}\label{singularity}
When $n=2$, we consider $\C^2$ as an open subset of $B$. Then $B\backslash \C^2$, if not empty, is a disjoint union of closed curves, whose normalizations are isomorphic to $\C$, and each of which has at most one singular point,  which is a cusp. Moreover $B$ has at most $\tilde{A}_n$ singularities, which locate at the singular points of the curve $B\backslash \C^2$.

\end{theo}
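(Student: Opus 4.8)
The plan is to descend to the smooth model $W:=\tilde F^{-1}(\C^2_Y)\subseteq\widetilde{\CP^2}$, on which everything is visible, and to analyse the restriction $C|_{W}\colon W\to B$ of the Stein contraction. First I would record the elementary facts. Since $\widetilde{\CP^2}$ is smooth and $C_{*}\ocal_{\widetilde{\CP^2}}=\ocal_{Z}$, the surface $Z$ is normal, hence so is the open subset $B$, and $\C^2_X$ is (as recorded before the theorem) a dense Zariski-open subset of it. The complement $D:=B\setminus\C^2_X$ is then a closed curve: it is the non-$\C^2_X$ part of the image of the divisor $W\cap p^{-1}(H_{\infty})$ under the proper morphism $C|_{W}$, and the connectedness of $p^{-1}(H_{\infty})$ forces $D$ to have no isolated points. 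Decomposing $D$ into irreducible components, every component $\Gamma$ is the image under $C$ of a unique component $\Gamma'$ of $W\cap p^{-1}(H_{\infty})$ that is \emph{not} $C$-contracted, with $C\colon\Gamma'\to\Gamma$ birational, and $\Gamma'$ is an open piece of one of the rational curves $L,E_{1},\dots,E_{m}$.

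The heart of the matter is to prove that each such $\Gamma'$ is isomorphic to $\C$. Let $\bar\Gamma'\cong\CP^1$ be the full curve in $\widetilde{\CP^2}$ containing $\Gamma'$; then $\bar\Gamma'\cap W=\bar\Gamma'\setminus\tilde F^{-1}(H_{\infty,Y})$, so the assertion amounts to: $\tilde F$ carries exactly one point of $\bar\Gamma'$ into $H_{\infty,Y}$. This is the step that genuinely uses the hypothesis. A polynomial self-map of $\C^2$ with non-vanishing Jacobian automatically has constant Jacobian, so $\tilde F^{*}(dy_{1}\wedge dy_{2})=c\,dx_{1}\wedge dx_{2}$ with $c\ne 0$ constant, and equating divisors of the two sides on $\widetilde{\CP^2}$ yields the identity
\[
R_{\tilde F}\;=\;K_{\widetilde{\CP^2}}+3\,\tilde F^{*}H_{\infty,Y}\;=\;-3\,p^{*}H_{\infty,X}+K_{p}+3\,\tilde F^{*}H_{\infty,Y}\;\ge\;0 ,
\]
where $R_{\tilde F}\ge 0$ is the ramification divisor of $\tilde F$, supported on $p^{-1}(H_{\infty})$ because $F$ is unramified on $\C^2_X$, and $K_{p}=K_{\widetilde{\CP^2}}-p^{*}K_{\CP^2_{X}}\ge 0$ is the (exceptional) relative canonical divisor of $p$. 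Reading off the multiplicities of this identity along $L$ and the $E_{i}$, and feeding them through the pole-order bookkeeping of Section \ref{poleorders} (Theorems \ref{order1} and \ref{order2}) together with the tree structure of $p^{-1}(H_{\infty})$, I expect to conclude that $\tilde F^{*}H_{\infty,Y}$ meets each non-contracted $\bar\Gamma'$ in a single reduced point. Consequently $\Gamma'\cong\CP^1\setminus\{\mathrm{pt}\}\cong\C$, and the normalization of $\Gamma$, being dominated by and birational to $\C$, is itself isomorphic to $\C$.

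Next I would analyse the singularities of $\Gamma$ and of $B$ along $\Gamma$ at the same time. The composite $\C\cong\Gamma'\hookrightarrow W\to B$, whose last arrow is $C$, can fail to be an immersion or an injection only over points where $\bar\Gamma'$ meets the locus contracted by $C$ inside $p^{-1}(H_{\infty})$. Since $p^{-1}(H_{\infty})$ is a tree of smooth rational curves, I expect the contracted configuration meeting $\bar\Gamma'$ to be a single chain $E_{j_{1}}-E_{j_{2}}-\dots-E_{j_{k}}$ attached to $\bar\Gamma'$ at a single point, namely the unique point of $\bar\Gamma'$ lying over $H_{\infty,Y}$, where the blow-ups accumulated; its self-intersection numbers are $\le -2$ because all the relevant blow-up centres lie on previously created exceptional curves. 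Contracting this chain produces on $B$ a normal singularity whose minimal-resolution graph is the $A_{k}$ chain with self-intersections $\le -2$, i.e.\ an $\tilde A_{k}$ singularity in the sense of the Definition above, and it simultaneously forces $\Gamma$ to be unibranch at that point with a single cuspidal singularity and smooth elsewhere. Hence each component of $D$ has at most one singular point, a cusp, located at an $\tilde A_{n}$ point of $B$. Finally, distinct components $\Gamma_{1}\ne\Gamma_{2}$ of $D$ are disjoint in $B$: their closures $\bar\Gamma'_{1},\bar\Gamma'_{2}$ can meet in $\widetilde{\CP^2}$ only at a node of the tree $p^{-1}(H_{\infty})$, and any such node is carried by $\tilde F$ into $H_{\infty,Y}$, hence lies outside $W$ and its image lies outside $B$.

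The main obstacle is the second paragraph: establishing that each non-contracted $\bar\Gamma'$ meets $\tilde F^{*}H_{\infty,Y}$ in exactly one reduced point. This is a genuinely global constraint, and it is exactly the place where the constancy of the Jacobian — equivalently, the measure-preservation property emphasized in the Introduction — has to be used in an essential way, not merely its non-vanishing. The effectivity of $R_{\tilde F}$, the discrepancy and pole-order combinatorics along the blow-up tree, and the minimality of the resolution are the technical tools; squeezing out of them a \emph{single} reduced point rather than a finite cluster (a multiply-punctured $\CP^1$ would already contradict the statement) is the delicate part. Granted that, the identification of the contracted configurations with $\tilde A_{n}$ chains and the disjointness of the components of $D$ should follow routinely from the tree structure of $p^{-1}(H_{\infty})$ and the minimality of the resolution.
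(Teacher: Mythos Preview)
Your overall architecture matches the paper's---pass to $W=\tilde F^{-1}(\C^2_Y)$, identify the non-contracted components $\bar\Gamma'$, and study what $C$ collapses---but the two places you mark as expectations are exactly where the argument lives, and your picture of the second one is mistaken. For the key ``single point'' step you propose the ramification identity $R_{\tilde F}=K_{\widetilde{\CP^2}}+3\,\tilde F^{*}H_{\infty,Y}$ together with Theorems~\ref{order1}--\ref{order2}. The paper does not go this route. Writing $E_0:=\bar\Gamma'$, it first uses Lemma~\ref{pair} to force every t-child of $E_0$ to have pole order $\le 0$, and then argues directly from Lemma~\ref{main} with a nonzero \emph{constant} test function: a constant is $L^1$ near any point of $\C^2_Y$ but not near $H_{\infty,Y}$, and pulling back shows that no child of $E_0$ can map to a curve touching $H_{\infty,Y}$ or to a point of $H_{\infty,Y}$; hence all children of $E_0$ contract to points of $\C^2_Y$. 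Separately, Lemma~\ref{pair}(b') forces the t-parent $E_{-1}$ of $E_0$ to have pole order $\le 2$, which (again via Lemma~\ref{main}) forces $E_{-1}$ to contract to a point of $H_{\infty,Y}$. This dichotomy---children go to $\C^2_Y$, parent goes to $H_{\infty,Y}$---is what yields $\Gamma'=E_0\setminus\{E_0\cap E_{-1}\}\cong\C$; it is not visible from effectivity of $R_{\tilde F}$ alone.

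The second issue is a genuine error of geometry. You locate the $C$-contracted chain ``at the unique point of $\bar\Gamma'$ lying over $H_{\infty,Y}$.'' But that point is $E_0\cap E_{-1}$, which lies \emph{outside} $W$ and is simply deleted when forming $\Gamma'$; nothing in $W$ is collapsed there. The configuration that $C$ contracts consists of the t-\emph{children} of $E_0$, which by the previous paragraph lie entirely inside $W$ and attach to $E_0$ at an interior point of $\Gamma'$. Minimality of the resolution is then used to show these children form a single line of b-ancestors of $E_0$ (hence with self-intersections $\le -2$), and contracting that chain produces both the $\tilde A_n$ point of $B$ and the cusp of $\Gamma$ at its image. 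Your disjointness argument inherits the same misconception: not every node of the tree maps to $H_{\infty,Y}$---the nodes $E_0\cap(\text{t-child})$ map into $\C^2_Y$---so disjointness of the components of $B\setminus\C^2$ actually follows from the ``all children of $E_0$ contract'' claim, which rules out a second non-contracted curve among the descendants of $E_0$, rather than from nodes being swept to infinity.
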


It is clear that $B\backslash \C^2$ is the union of the images of those $E_i$'s that are not mapped to $H_{\infty}\subset \CP^2_Y$. When $B$ is smooth along the image of such an $E_i$ under $C$, we can say more about $\tilde{F}$.
\begin{theo}\label{piecewise}
Let $E_i$ be an exceptional curve of $p$. If $C(E_i)\cap B$ is a curve, along which $B$ is smooth. Then the restriction map $\tilde{F}|_{E_i}$ is birational. And the only piecewise-singularity of $\tilde{F}(E_i)$ has local defining function $x^p=y^{p+1}$. And when the pole order of $E_i$ is $0$, there is no piecewise-singularity.

\end{theo}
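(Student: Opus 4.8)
The plan is to analyze the map $\tilde{F}|_{E_i}: E_i \to \tilde{F}(E_i) \subset \CP^2_Y$ by combining the Stein factorization with the hypothesis that $B$ is smooth along the curve $C(E_i)\cap B$. First I would observe that since $\tilde{F} = \pi \circ C$ and $\pi$ is finite, the image $\tilde{F}(E_i)$ is a curve (not a point), and $\tilde{F}(E_i)$ meets $\C^2_Y$ precisely because $E_i$ is not contracted to $H_\infty^Y$ (this is the hypothesis that $C(E_i)\cap B$ is a curve rather than lying over $H_\infty^Y$). Since $E_i \cong \CP^1$ and $C(E_i)$ is a rational curve on the smooth surface $B$ (locally), I would want to show $C|_{E_i}$ is birational onto its image; the key point here is that $C$ is the Stein factorization morphism, so it has connected fibers, hence $C|_{E_i}$ has connected fibers over $C(E_i)$, and a morphism $\CP^1 \to C(E_i)$ with connected fibers onto a curve is birational. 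Then $\tilde{F}|_{E_i} = \pi|_{C(E_i)} \circ C|_{E_i}$, so it remains to analyze $\pi$ restricted to $C(E_i)$.

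Next I would bring in the pole order machinery and the integrability/isomorphism observation emphasized in the introduction. The pole order of $E_i$ controls how the Lebesgue-measure-preserving structure degenerates near $C(E_i)$; I expect that when $B$ is smooth along $C(E_i)$, the finite map $\pi$ near $\pi(C(E_i))$ is étale away from finitely many points, and the local behavior of $\pi|_{C(E_i)}$ at a branch point is governed by the local structure already classified in Theorem \ref{singularity}. Since Theorem \ref{singularity} says $B\backslash\C^2$ is a union of curves with normalization $\C$ and at most one cusp each, and since here $B$ is smooth along our curve, the curve $C(E_i)\cap B$ has normalization $\C$; because $C|_{E_i}$ is birational and $E_i\cong\CP^1$, the curve $C(E_i)$ is the closure of an affine line, so it adds exactly one point at infinity. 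The upshot is that $\tilde{F}(E_i)$ is a rational curve whose only singularity in $\C^2_Y$ comes from how $\pi$ folds $C(E_i)$, and I would show by a local computation at the relevant point that this singularity has the form $x^p = y^{p+1}$, where $p$ is the pole order of $E_i$ — the exponent $p$ entering because the pole order measures exactly the vanishing order of the relevant differential/local coordinate along $E_i$, and the $p+1$ being forced by the smoothness of $B$ together with the Jacobian (measure-preserving) condition, which rigidifies the two local exponents to be consecutive integers.

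For the final clause, when the pole order of $E_i$ is $0$, I would argue that the local model $x^p = y^{p+1}$ degenerates to $x^0 = y^1$, i.e. the curve is locally $\{y = \text{unit}\}$, which is smooth — so there is no piecewise-singularity. More carefully, pole order $0$ should mean that near $C(E_i)$ the map $\tilde{F}$ does not pick up any extra vanishing, so $\pi$ is unramified along $C(E_i)$ and $\tilde{F}|_{E_i}$ is an immersion onto a smooth curve; I would tie this to the explicit description from Theorems \ref{order1} and \ref{order2} (the pole-order lemmas) to confirm that pole order $0$ forbids the folding behavior.

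The main obstacle I anticipate is the precise local computation identifying the singularity type as exactly $x^p = y^{p+1}$: one has to track simultaneously (a) the local equation of $C(E_i)$ in the smooth surface $B$, (b) the local form of the finite map $\pi$ near the relevant point, and (c) the constraint imposed by the constancy of the Jacobian of $F$ (the measure-isomorphism observation), and show these combine to force the two exponents to be $p$ and $p+1$ rather than some other pair with the same difference or ratio. Establishing that the pole order is precisely the exponent $p$ — rather than merely proportional to it or bounded by it — is where I expect the bookkeeping with the definition of pole order from Section \ref{poleorders} to be essential, and I would handle it by writing $\tilde{F}$ in local coordinates adapted to the tower of blow-ups producing $E_i$ and reading off the orders of vanishing directly.
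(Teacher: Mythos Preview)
Your proposal has a basic misreading that derails the argument: the $p$ in the local model $x^p=y^{p+1}$ is \emph{not} the pole order of $E_i$. Since $C(E_i)$ meets $B$, the curve $E_i$ lies outside the support of $T$, so by Theorem~\ref{order2} its pole order is $\leq 0$; the letter $p$ in the statement is just a positive integer, and the content of the theorem is only that the two exponents of the cusp are \emph{consecutive}. Consequently your ``pole order $0$ gives $x^0=y^1$'' argument and your plan to read the exponent off from the blow-up tower both collapse.

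Beyond this, the Stein-factorization decomposition $\tilde F|_{E_i}=\pi|_{C(E_i)}\circ C|_{E_i}$ does not organize the proof in a usable way. The paper does not argue through $\pi$ at all; it works in local coordinates $(x,y)$ on $\widetilde{\CP^2}$ with $E_i=(y=0)$ and exploits two analytic constraints you never isolate. First, writing $\tilde F=(y^m g,f)$ near a point where the image branch is smooth, an integrability (singularity-exponent) argument using Lemma~\ref{main} and the relation $-p=2m-2$ forces $g(0,0)\neq 0$; taking an $m$-th root then factors $\tilde F$ as $(x,y)\mapsto(x^m,y)$ up to biholomorphism, and in particular the Jacobian satisfies $J_{\tilde F}\sim y^{m-1}$. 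Second, at a point where the image branch is the cusp $z^a=w^b$, one writes $\tilde F|_{E_i}$ through the normalization as $x\mapsto(x^{tb},x^{ta})$ and expands $J_{\tilde F}$ in Taylor series; the requirement that $J_{\tilde F}=y^{m-1}\cdot(\text{unit})$ forces $t(b-a)=1$, hence $t=1$ and $b=a+1$. The case $m=1$ (pole order $0$) forces $ta=1$, so the image branch is smooth. Birationality is proved \emph{last}, not first: since $t=1$ everywhere on $C^{-1}(B)\cap E_i$, the lift of $\tilde F|_{E_i}$ to the normalization has at most one critical point, and a map $\CP^1\to\CP^1$ with at most one critical point has degree $1$. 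Your outline never reaches either the $g(0,0)\neq 0$ step or the Jacobian Taylor-coefficient step, and these are where the actual work lies.
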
 

\begin{rem}
By piecewise-singularity of a curve we mean the following:

At a singular point of a curve, locally, we can decompose the curve into irreducible pieces analytically (the number of pieces equals the number of preimages of this point under the normalization map). Then we call the singularity on each component a piecewise-singularity of that curve. One sees that the local models of piecewise-singularities are given by $x^p=y^q$, where $x$ and $y$ are local holomorphic cooridnates and $p, q$ are coprime integers. 

\end{rem}

This article is structured as follows. In section \ref{poleorders}, we will develop our ideas by proving some technical lemmas. Then in section \ref{proofs}, we will first prove theorem \ref{singularity}. Then using the ideas in the proof of theorem \ref{singularity}, we prove theorem \ref{auto} and its converse theorem \ref{converse}. In the end, we will  prove theorem \ref{piecewise} and give some interesting examples. It is interesting but not surprising that in the proof of theorem \ref{piecewise}, we use the idea of singularity exponent from \cite{DeKo}, since we are using integrability of functions in this article.

\textbf{Acknowledgement}: The author would like to thank Professor Shiffman for his uninterrupted unconditional supports.

\section{Pole Orders}\label{poleorders}

For now, let us go back to $\C^n$. The Euclidean measure on $\C^n$ can be seen as a measure on $\CP^n$ or $\widetilde{\CP^n}$ by just setting the complement to carry $0$ measure. We denote the measure by $d\mu$, and by $d\mu_X$ and $d\mu_Y$ the corresponding measures on the domain and target space of $\tilde{F}$ respectively. 
 A basic observation is the following: 

\begin{lem}
Let $f$ be a function on $\CP^n$ with measure $d\mu_Y$, $\tilde{f}=f\circ \tilde{F}$ the pull back of $f$ to $\widetilde{\CP^n}$ with measure  $d\mu_X$. Then $f$ is $L^1$ if and only if $\tilde{f}$ is $L^1$. In fact, we always have $\int |\tilde{f}|d\mu_X=j\int |f|d\mu_Y$, where $j$ is the degree of $\tilde{F}_*: H_{2n}(\widetilde{\CP^n}, \Z)\to H_{2n}(\CP^n,\Z)$.

\end{lem}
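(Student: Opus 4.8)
The plan is to strip away a Lebesgue--null set and then invoke the classical change of variables (area) formula on $\C^n_X$. First I would note that, by the definition of $d\mu_X$ and $d\mu_Y$ as the Euclidean measures extended by $0$, the complements $p^{-1}(H_\infty)=\widetilde{\CP^n}\sm\C^n_X$ and $H_\infty=\CP^n\sm\C^n_Y$ carry no mass, while $p$ restricts to an isomorphism over $\C^n$ and $\tilde F$ restricts there to $F$. Hence
\[
\int_{\widetilde{\CP^n}}|\tilde f|\,d\mu_X=\int_{\C^n_X}|f\circ F|\,dV
\qquad\text{and}\qquad
\int_{\CP^n}|f|\,d\mu_Y=\int_{\C^n_Y}|f|\,dV ,
\]
$dV$ being Lebesgue measure. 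It thus suffices to show $\int_{\C^n_X}(h\circ F)\,dV=j\int_{\C^n_Y}h\,dV$ for every nonnegative Borel function $h$ on $\C^n_Y$; the $L^1$ equivalence is then immediate, both sides lying in $[0,\infty]$.

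Two ingredients feed into this. (i) Since $F$ is a Jacobian map, $\det JF$ is a nowhere--vanishing polynomial, hence a nonzero constant $c$ by the Nullstellensatz; therefore the real Jacobian of $F\colon\R^{2n}\to\R^{2n}$ equals the constant $|c|^2>0$, so $F$ is everywhere a local biholomorphism, in particular open, and $\tilde F$ is dominant, hence (being proper) surjective. (As is customary in this context, one normalizes $c=1$.) (ii) The image $\tilde F\big(p^{-1}(H_\infty)\big)$ is a closed subvariety of $\CP^n$ of complex dimension $\le n-1$, so $S:=\tilde F\big(p^{-1}(H_\infty)\big)\cap\C^n_Y$ is Lebesgue--null. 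Put $V:=\C^n_Y\sm S$. A point of $p^{-1}(H_\infty)$ maps into $S$, so $\tilde F^{-1}(V)\subset\C^n_X$, where $\tilde F=F$; moreover $\tilde F\colon\tilde F^{-1}(V)\to V$ is proper (preimages of compacta are compact, as $\tilde F$ is proper) with everywhere invertible differential, hence a covering map. Since $V$ is connected this covering has a constant finite degree $k\ge1$, and $\#F^{-1}(y)=k$ for every $y\in V$. Finally $k=j$: the degree $j$ of $\tilde F_*$ on top homology, determined by $\tilde F_*[\widetilde{\CP^n}]=j\,[\CP^n]$, is the signed number of points in a fibre over a regular value, and over $V$ the holomorphic map $\tilde F$ is unramified, so every local degree is $+1$ and the signed count is simply $k$.

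With (i) and (ii) in hand, the area formula applied to the $C^1$ map $F$ gives, for nonnegative Borel $h$,
\[
\int_{\C^n_X}(h\circ F)\,|c|^2\,dV=\int_{\C^n_Y}h(y)\,\#F^{-1}(y)\,dV(y)=j\int_{\C^n_Y}h\,dV ,
\]
the last step because $\#F^{-1}(y)=j$ for $dV$--a.e.\ $y$. Dividing by $|c|^2$ and taking $h=|f|$ yields $\int|\tilde f|\,d\mu_X=|c|^{-2}j\int|f|\,d\mu_Y$, which is the stated identity once $\det JF\equiv1$; in any case $\tilde f\in L^1$ if and only if $f\in L^1$, as claimed.

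The step I expect to be the real obstacle is the identification $k=j$ in (ii): one must know that the ``stray'' part of the fibres of $\tilde F$ (points lying on $p^{-1}(H_\infty)$, or anything else that might inflate the preimage count) is confined to the null set $S$, and that properness of the projective morphism $\tilde F$ promotes the local--diffeomorphism property over $V$ to an honest covering of globally constant degree, equal to the homological degree. The remaining steps are either definitional (the measures vanish off $\C^n$) or a direct appeal to the area formula.
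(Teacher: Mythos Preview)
Your proof is correct and follows essentially the same route as the paper's: delete a measure--zero set so that $\tilde F$ becomes an honest covering of degree $j$, and use that the Jacobian of $F$ is a nonzero constant so that the Euclidean volume form is preserved. The paper's proof is a three--line sketch (``$\tilde F$ is a branched covering; delete the branched sets; the covering is of degree $j$; the volume form is preserved''), and your write--up supplies the details it omits --- in particular the identification of the covering degree with the homological degree $j$, which you rightly flag as the only non--formal step.
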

\begin{proof}
$\tilde{F}$ is a branched covering. So by deleting the branched sets from the domain and image, which are of measure 0, we get a covering map, which is of degree $j$. As noticed in the introduction, the volume form is preserved by $\tilde{F}$. So the lemma follows. 
\end{proof}

The local version is the following:
\begin{lem}  \label{main} 
Let $\tilde{F}(x)=y$, $f$ a function in a small neighborhood of $y$. 
\begin{itemize}
\item[a] If $\tilde{F}$ is finite in a neighborhood $x$, then $f$ is locally $L^1$ at $y$ if and only if $f\circ \tilde{F}$ is locally $L^1$ at $x$.
\item[b] Without the finite assumption, if  $f$ is locally $L^1$ at $y$ then $f\circ \tilde{F}$ is locally $L^1$ at $x$.
\end{itemize}

\end{lem}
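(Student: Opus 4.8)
The plan is to reduce matters, exactly as in the preceding lemma, to the ordinary change‑of‑variables formula, using the two facts from the introduction: the blow‑up centres defining $p$ are disjoint from $\C^n$, so $\tilde F$ restricts to the polynomial map $F$ on $\C^n_X\subset\widetilde{\CP^n}$; and the Jacobian of $F$ is a nonzero constant, so $F^*d\mu_Y=c\,d\mu_X$ on $\C^n_X$ for a constant $c>0$. Since moreover $d\mu_X$ vanishes on $p^{-1}(H_\infty)$ and $d\mu_Y$ vanishes on $H_\infty$, for any Borel set $V\subset\widetilde{\CP^n}$ only the slice $V\cap\C^n_X$ contributes to $\int_V|f\circ\tilde F|\,d\mu_X$, and likewise downstairs.

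The core step is a local form of the preceding lemma: if $W\subset\CP^n_Y$ is a small ball around $y$ and $V\subset\widetilde{\CP^n}$ is open with $\tilde F|_V\colon V\to W$ proper and surjective, then
\[
\int_V|f\circ\tilde F|\,d\mu_X=\frac{\delta}{c}\int_W|f|\,d\mu_Y,
\]
where $\delta\ge 1$ is the number of sheets of $\tilde F|_V$. I would prove this just as the preceding lemma: delete from $W$ the nowhere‑dense analytic set $S$ consisting of the branch locus of $\tilde F|_V$, the image $\tilde F(V\cap p^{-1}(H_\infty))$, and $W\cap H_\infty$ (all $d\mu_Y$‑null, the middle one analytic by the proper mapping theorem), and delete $\tilde F^{-1}(S)$ from $V$ (which is $d\mu_X$‑null because $F$ is a dominant local biholomorphism); what survives lies in $V\cap\C^n_X$ over $W\cap\C^n_Y$ and is a $\delta$‑sheeted covering, on which $F^*d\mu_Y=c\,d\mu_X$ yields the identity. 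Granting this, part (b) is immediate and needs no finiteness hypothesis: choose a ball $W$ around $y$ on which $f$ is $L^1$ and set $V=\tilde F^{-1}(W)$, an open neighbourhood of $x$ over which $\tilde F$ is proper and surjective; then $\int_V|f\circ\tilde F|\,d\mu_X=\frac{j}{c}\int_W|f|\,d\mu_Y<\infty$.

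For part (a) the work is in producing, from the finiteness assumption, a $V$ to which the local identity applies and which lies inside a given neighbourhood $U_1$ of $x$ on which $f\circ\tilde F$ is $L^1$. Since $x$ is isolated in the compact fibre $\tilde F^{-1}(y)$, I would fix a relatively compact open $U$ with $x\in U$, $\overline U\subset U_1$, and $\overline U\cap\tilde F^{-1}(y)=\{x\}$; by the standard local structure of a proper map, for every small enough ball $W\ni y$ the set $U_0:=\tilde F^{-1}(W)\cap U$ is clopen in $\tilde F^{-1}(W)$, so $\tilde F|_{U_0}\colon U_0\to W$ is proper, and $U_0\subset U_1$. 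Shrinking $W$ once more (upper semicontinuity of fibre dimension, together with $\dim(\tilde F^{-1}(y)\cap U_0)=0$) makes every fibre of $\tilde F|_{U_0}$ finite; then $\tilde F(U_0)$ is, by Remmert's proper mapping theorem, an $n$‑dimensional analytic subset of $W$, hence all of $W$, so $\tilde F|_{U_0}$ is proper and surjective with some $\delta\ge 1$ sheets. Applying the local identity with $V=U_0$ gives
\[
\frac{\delta}{c}\int_W|f|\,d\mu_Y=\int_{U_0}|f\circ\tilde F|\,d\mu_X\le\int_{U_1}|f\circ\tilde F|\,d\mu_X<\infty,
\]
so $f$ is locally $L^1$ at $y$.

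The main obstacle, as usual in this circle of arguments, is the local structure underlying part (a): converting the hypothesis that $\tilde F$ is finite at $x$ into a genuine branched covering $U_0\to W$ onto a full neighbourhood of $y$ — this is where properness of $\tilde F$ and Remmert's theorem enter — and keeping honest track of the several null sets involved, namely $p^{-1}(H_\infty)$ upstairs, and $W\cap H_\infty$, the branch locus, and $\tilde F(U_0\cap p^{-1}(H_\infty))$ downstairs, so that the identity $F^*d\mu_Y=c\,d\mu_X$ can be applied cleanly. The change of variables itself, and all of part (b), are routine.
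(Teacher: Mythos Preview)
Your argument is correct and follows the same strategy as the paper: reduce to the change-of-variables identity inherited from the global lemma, and for part~(a) use that a finite holomorphic map is open so that a small neighbourhood of $x$ maps onto a full neighbourhood of $y$. The paper simply invokes the open mapping theorem for finite maps as a black box, whereas you spell this out via properness and Remmert's theorem and keep explicit track of the sheet count $\delta$ and the null sets; this is more careful than the paper's own (rather terse) proof, but not a different route.
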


\begin{proof}
Part (b) follows directly from lemma \ref{main}. 

With the assumption of part (a),
$\tilde{F}$ is finite at a generic point $p$ of $E$, so by the open mapping theorem \cite{open}, the image a small neighborhood $U$ of $p$ contains a neighborhood $V$ of $q=\tilde{F}(p)$. Then together with lemma \ref{main}, we get part (a).

\end{proof}

The measure on $\CP^n$ is not hard to understand. In the $\C^n$ part, it is just the Euclidean measure given by $\lambda=\frac{1}{n!}\omega^n$, where $\omega=\frac{\sqrt{-1}}{2}\sum_{i=1}^ndz_i\wedge d\bar{z}_i$. Around $H_{\infty}=\CP^n\backslash \C^n $, we can calculate directly to see that $d\mu$ is of the form $\frac{1}{|y|^{2n+2}}\frac{1}{n!}\omega^n$, where $y$ is a local defining function of $H_{\infty}$. Coming to $\widetilde{\CP^n}$, we need a sequence of blow-ups. Then the exact formulas of $d\mu$ is complicated to calculate. But when $n=2$, the surface case, we have  an easier description. 

In the following, we will assume $n=2$.

Now $P:\widetilde{\CP^2}\to \CP^2$ is just a composition of a sequence of monoidal transformation. In order to do the calculation, we consider $P:\widetilde{\CP^2}\to \CP^2$ as a process of blow-ups, with all the blow-up centers lying on the exceptional curves or the strict transforms of $L=\CP^2\backslash \C^2$. For simplicity, we will call the strict transforms of $L$ together with the exceptional curves the t-curves, all of which are copies of $\CP^1$.
During the process, there are two types of blow-ups. One is called a single blow-up, meaning the blow-up center does not lie on the intersections of the t-curves. The other type is called a double blow-up, meaning that the blow-up center lie on  the intersections of the t-curves, with the understanding that no three t-curves intersect at a single point.

 We start the calculation with a single blow-up at $p\in L$. Let $(x,y)$ be local coordinate at $p$, with $L=(y=0)$, meaning that $y$ is the defining function of $L$. Then  $d\mu=\frac{1}{|y|^6} \lambda$. We write the blow-up locally as $Bl_p:\tilde{U}\to U$, $E=Bl_p^{-1}(p)$ the exceptional curve, and $\tilde{L}$ the strict transform of $L$. Let $q=E\cap \tilde{L}$. Then around any point $\tilde{L}\backslash q$, the form of $d\mu$ remain the same. 
 
  Let $y=xz$, then around $q$, $(x,z)$ are local coordinates, with $\tilde{L}=(z=0)$, and $E=(x=0)$. So $Bl_p^*(d\mu)=\frac{\lambda}{|x|^4|z|^6}$. At a different point $o\in E$, let $x=yz$, then $(y,z)$ are local coordinates, with $E=(y=0)$. And  $Bl_p^*(d\mu)=\frac{\lambda}{|y|^4}$. One should be convinced immediately by the following conclusion:

  \begin{lem}\label{single}
  Let $E_0$ be a t-curve. Let $Bl_p$ be a single blow-up with $p\in E_0$. And suppose around $p$, $d\mu=\frac{\lambda}{|y|^m}$ with $E_0=(y=0)$. Let $E_1=Bl_p^{-1}(p)$, $\tilde{E_0}$ the strict transform of $E_0$, and $q=\tilde{E_0}\cap E_1$. Then 
  \begin{itemize}
  \item[a]Around $q$,  $Bl_p^*(d\mu)=\frac{\lambda}{|x|^{m-2}|z|^m}$, where $(x,z)$ are local coordinates with $\tilde{E_0}=(z=0)$ and $E_1=(x=0)$.
  
  \item[b] Around a point $o\in E_1\backslash \{q\}$, $Bl_p^*(d\mu)=\frac{\lambda}{|y|^{m-2}}$, where $(y,z)$ are local coordinates with $E_1=(y=0)$.
  \end{itemize}
  
  \end{lem}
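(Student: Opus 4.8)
The plan is to verify the two assertions by an elementary local computation: both reduce to the Jacobian of a monoidal transformation together with a count of the orders of vanishing of the relevant defining functions. Fix local coordinates $(x,y)$ near $p\in E_0$ with $E_0=(y=0)$, so that by hypothesis $d\mu=\lambda/|y|^m$, where $\lambda=\frac{1}{2!}\omega^2$ is the Euclidean density and $\omega=\frac{\sqrt{-1}}{2}(dx\wedge d\bar x+dy\wedge d\bar y)$. The blow-up $Bl_p$ is covered by two standard affine charts: the chart with coordinates $(x,z)$ in which the map is $(x,z)\mapsto(x,xz)$, so that $E_1=(x=0)$ and the proper transform $\tilde E_0=(z=0)$ meet at the origin $q$; and the chart with coordinates $(y,z)$ in which the map is $(y,z)\mapsto(yz,y)$, so that $E_1=(y=0)$ and $\tilde E_0$ is not visible — this chart therefore covers a neighborhood of every point $o\in E_1\backslash\{q\}$.

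For part (a) I would work in the first chart. From $y=xz$ one gets $dy=z\,dx+x\,dz$, and since every term in the expansion of $dy\wedge d\bar y$ except $|x|^2\,dz\wedge d\bar z$ contains a repeated $dx$ or $d\bar x$, one finds $dx\wedge d\bar x\wedge dy\wedge d\bar y=|x|^2\,dx\wedge d\bar x\wedge dz\wedge d\bar z$; that is, $Bl_p^{\,*}\lambda=|x|^2\lambda$ in the $(x,z)$-coordinates. Combining this with $|y|^m=|x|^m|z|^m$ gives $Bl_p^{\,*}d\mu=|x|^2\lambda/(|x|^m|z|^m)=\lambda/(|x|^{m-2}|z|^m)$, as claimed. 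Part (b) is the same computation in the second chart: from $x=yz$ one gets $dx=z\,dy+y\,dz$, hence $Bl_p^{\,*}\lambda=|y|^2\lambda$; here the $y$-coordinate agrees with the base coordinate, so $|y|^m$ is unchanged and $Bl_p^{\,*}d\mu=\lambda/|y|^{m-2}$.

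There is no serious obstacle here; the computation is routine. The only things one must keep straight are (i) which of the two new coordinates is the exceptional divisor $E_1$ and which is the proper transform $\tilde E_0$, and (ii) the factor $|x|^2$ (respectively $|y|^2$) coming from the Jacobian determinant of the monoidal transformation, which is exactly what drops the pole order by $2$ along the new curve $E_1$ while leaving it unchanged along $\tilde E_0$. That the order is unchanged near points of $\tilde E_0\backslash\{q\}$, as remarked just before the lemma, is automatic since $Bl_p$ restricts to a biholomorphism there.
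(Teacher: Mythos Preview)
Your proof is correct and is essentially identical to the paper's own argument: the paper carries out exactly this two-chart Jacobian computation (with $y=xz$ for the chart containing $q$ and $x=yz$ for the other chart) in the special case $m=6$ immediately before stating the lemma, and then simply asserts that the general case follows in the same way. Your write-up just fills in the general-$m$ details that the paper leaves to the reader.
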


Next, we consider a double blow-up. Let $p=E_0\cap E_1$, with local coordinates $(x,y)$, and $E_0=(x=0)$, $E_1=(y=0)$. We assume that $d\mu=\frac{\lambda}{|x|^m|y|^n}$. Then after blowing-up, we get a new t-curve $E_2$, which intersects $\tilde{E_0}$ at $q$, and  intersects $\tilde{E_1}$ at $o$. Now around $q$, we have local coordinates $(y,z)$, with $x=yz$, and $\tilde{E_0}=(z=0)$, $E_2=(y=0)$. So around $q$, $Bl_p^*(d\mu)=\frac{\lambda}{|y|^{m+n-2}|z|^m}$. Around $o$, we have local coordinates $(x,w)$, with $y=xw$, and $\tilde{E_1}=(w=0)$, $E_2=(x=0)$. So around $q$, $Bl_p^*(d\mu)=\frac{\lambda}{|x|^{m+n-2}|w|^n}$. And for any other point in $E_2\backslash \{q,o\}$, if $E_2=(t=0)$, then around $q$, $Bl_p^*(d\mu)=\frac{\lambda}{|t|^{m+n-2}}$.

With the understanding of the calculations above, we can define

\begin{defin}
We say a t-curve $E$ has pole order $m$, if around a generic point (different from those intersections with other t-curves), $d\mu=\frac{\lambda}{|y|^m}$, where $y$ is the local defining function of $E$, written as $E=(y=0)$.

\end{defin}

One sees that in this way we have defined a pole order for each t-curve. And since the process of blowing-up start from $L$ which has pole order 6, the pole orders of all t-curves are even. We will call the part $\frac{1}{|y|^m}$ in $d\mu$ the principle of $d\mu$. The pole order also generalize to the points in $\C^2$, which of course are 0. At the intersections of t-curves, the principle of the volume form simply is the product of the principles of the intersecting t-curves. We can now summarize as follows

\begin{theo}\label{pole}With the notations as above,

\begin{itemize}
\item[a] If we have a single blow-up at a point $p$ in a t-curve $E$ with pole order $m$, then the new t-curve $Bl_p^{-1}(p)$ has pole order $m-2$.

\item[b] If we have a double blow-up at a point $p=E_0\cap E_1$ with pole order $m$ and $n$ respectively, then the new t-curve $Bl_p^{-1}(p)$ has pole order $m+n-2$.
\end{itemize}

\end{theo}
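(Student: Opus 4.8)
The plan is to derive both parts by the explicit chart computation already begun in the discussion preceding the statement, carried out for one monoidal transformation. The single analytic input is how the Lebesgue measure $\lambda$ (a constant multiple of $dx\wedge d\bar x\wedge dy\wedge d\bar y$ in holomorphic coordinates) transforms under a blow-up: in the chart $(x,t)$ with $y=tx$ one has $dy=t\,dx+x\,dt$, hence $dx\wedge d\bar x\wedge dy\wedge d\bar y=|x|^2\,dx\wedge d\bar x\wedge dt\wedge d\bar t$, i.e.\ $Bl_p^*\lambda=|x|^2\lambda$ (this $|x|^2$ is just the real Jacobian of $(x,t)\mapsto(x,tx)$); symmetrically $Bl_p^*\lambda=|y|^2\lambda$ in the chart $(s,y)$ with $x=sy$. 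With this in hand the rest is bookkeeping of reciprocal powers.

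For part (a) I take $E_0=(y=0)$ of pole order $m$ and $p\in E_0$ away from every other t-curve, so $d\mu=\lambda/|y|^m$ near $p$. In the chart $(x,t)$, $y=tx$, the exceptional curve is $E_1=(x=0)$ and $\widetilde{E_0}=(t=0)$, meeting at $q=(0,0)$; there $Bl_p^*(d\mu)=|x|^2\lambda/|tx|^m=\lambda/(|x|^{m-2}|t|^m)$, so $E_1$ acquires pole order $m-2$ near $q$ while $\widetilde{E_0}$ keeps pole order $m$. In the complementary chart $(s,y)$, $x=sy$, which covers $E_1\setminus\{q\}$, one has $E_1=(y=0)$ and $Bl_p^*(d\mu)=|y|^2\lambda/|y|^m=\lambda/|y|^{m-2}$, so a generic point of $E_1$ also has pole order $m-2$; in particular the pole order of $E_1$ is well defined and equals $m-2$. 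Part (b) is the same calculation starting from $d\mu=\lambda/(|x|^m|y|^n)$ near $p=E_0\cap E_1$ with $E_0=(x=0)$, $E_1=(y=0)$: in the chart $x=yz$ one gets $Bl_p^*(d\mu)=|y|^2\lambda/(|yz|^m|y|^n)=\lambda/(|y|^{m+n-2}|z|^m)$, so the new curve $E_2=(y=0)$ has pole order $m+n-2$ near its intersection with $\widetilde{E_0}$ (which still has pole order $m$); the chart $y=xw$ gives the mirror statement near $\widetilde{E_1}$, and a generic point of $E_2$ lies in one of these charts away from both intersection points, so it too has pole order $m+n-2$.

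What really needs care — and the closest thing here to an obstacle — is the consistency of these one-step formulas, i.e.\ showing that at every point of $\widetilde{\CP^2}$ the measure $d\mu$ has the asserted normal form: $\lambda$ times a product of reciprocal powers of the local defining functions of the t-curves through that point, with exponents depending only on those t-curves. This is exactly what makes ``pole order'' an invariant rather than an artifact of a chart or of the chosen generic point, and it is used implicitly throughout. I would prove it by induction on the number of blow-ups: the base case is $d\mu=\lambda/|y|^6$ near $H_\infty\subset\CP^2$, from the formula $d\mu=|y|^{-2n-2}\lambda$ at $n=2$; each single blow-up step is handled by Lemma \ref{single} together with the computation above, each double blow-up step by the computation above, and near points lying off the new blow-up center the normal form is unchanged because the blow-up is there a biholomorphism. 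The base case together with this inductive step yields the theorem and, at the same time, the well-definedness of pole order.
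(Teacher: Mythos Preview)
Your proof is correct and follows essentially the same approach as the paper: the paper carries out exactly these chart computations in the paragraphs immediately preceding the theorem (first the single blow-up case, recorded as Lemma~\ref{single}, then the double blow-up case), and then states Theorem~\ref{pole} as a summary with no separate proof. Your write-up is in fact slightly more explicit than the paper's in two respects --- you isolate the Jacobian identity $Bl_p^*\lambda=|x|^2\lambda$ as the single analytic input, and you spell out the inductive argument that the normal form of $d\mu$ (and hence the pole order) is well defined at every stage --- but these are elaborations of, not departures from, the paper's computation.
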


Again, we consider the morphism $\tilde{F}:\widetilde{\CP^2}\to \CP^2$. Let $T$ denote the pull back $\tilde{F}^*(H_{\infty})$ of the divisor $H_{\infty}\subset \CP^2$. Then $T=\sum a_iE_i$, with $a_i$ positive integers, and $E_i$ t-curves. The support of $T$ may not contain all t-curves. For now, each t-curve $E_i$ in the support of $T$ carries a data with 3 numbers (p,m,s), where $p$ is the pole order,  $m=a_i$ the multiplicity, and $s=E_i^2$ the self intersection number. Now we can apply lemma \ref{main} and get the following theorem.

\begin{theo}\label{order1}
Let $E$ be a t-curve in the support of $T$, with pole order $p$ and multiplicity $m$

\begin{itemize}
\item[a] If $\tilde{F}(E)=H_{\infty}$, then  $m=\frac{p-2}{4}$.

\item[b] If $\tilde{F}(E)$ is a point, then $m\geq \frac{p-2}{4}$
\end{itemize}

\end{theo}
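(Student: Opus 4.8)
The plan is to test Lemma \ref{main} against the one-parameter family of functions $f_\rho=|w|^{2\rho}$, $\rho\in\R$, where $w$ is a local defining function of $H_{\infty}$ in the target, and to read off the conclusion by comparing the two integrability thresholds this produces.

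First I would fix a \emph{generic} point $x\in E$: one avoiding the finitely many intersections of $E$ with the other t-curves and also avoiding the non-finite locus of $\tilde F$ (which, being a proper closed subset of $\widetilde{\CP^2}$ of dimension $\le 1$, is a finite union of curves, and in case (a) does not contain $E$). Put $y=\tilde F(x)$, which lies on $H_{\infty}$ because $E\subset\supp\tilde F^*H_{\infty}=\tilde F^{-1}(H_{\infty})$. Choose local holomorphic coordinates $(w,u)$ centered at $y$ with $H_{\infty}=(w=0)$, and $(t,v)$ centered at $x$ with $E=(t=0)$. By the computation of $d\mu$ near $H_{\infty}$ in Section \ref{poleorders} (the case $n=2$), $d\mu_Y$ is comparable near $y$ to $|w|^{-6}\,dV$; by the definition of pole order, $d\mu_X$ is comparable near $x$ to $|t|^{-p}\,dV$. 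Since $\tilde F$ restricts to $F$ on $p^{-1}(\C^2_X)\cong\C^2_X$ and $F$ maps $\C^2_X$ into $\C^2_Y$, the divisor $T=\tilde F^*H_{\infty}$ is supported on t-curves; hence near the generic point $x$ no component of $(w\circ\tilde F=0)$ other than $E$ passes through, and since the multiplicity of $E$ in $T$ is $m$ we obtain the normal form $w\circ\tilde F=t^{m}h$ with $h$ holomorphic and nonvanishing near $x$.

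A direct polar-coordinate computation then shows that $f_\rho$ is locally $L^1$ at $y$ with respect to $d\mu_Y$ exactly when $\rho>2$, while $f_\rho\circ\tilde F=|t|^{2m\rho}|h|^{2\rho}$, with $|h|^{2\rho}$ bounded above and below by positive constants near $x$, is locally $L^1$ at $x$ with respect to $d\mu_X$ exactly when $\rho>\frac{p-2}{2m}$. In case (a), $\tilde F(E)=H_{\infty}$ forces $E$ not to be contracted, so $\tilde F$ is finite at the generic point $x$, and Lemma \ref{main}(a) applies to every $f_\rho$: the two conditions $\rho>2$ and $\rho>\frac{p-2}{2m}$ are then equivalent for all $\rho$, which is possible only if $\frac{p-2}{2m}=2$, i.e. $m=\frac{p-2}{4}$. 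In case (b), $\tilde F(E)$ is a point, so $E$ lies in a positive-dimensional fiber and only Lemma \ref{main}(b) is available; applied to every $f_\rho$ it yields the implication $[\rho>2]\Rightarrow[\rho>\frac{p-2}{2m}]$ for all $\rho$, which forces $\frac{p-2}{2m}\le 2$, i.e. $m\ge\frac{p-2}{4}$.

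I expect the only genuine difficulty to be the local analysis at the generic point $x$: one must ensure $x$ can be chosen so that $w\circ\tilde F$ has the clean normal form $t^{m}h$ — this uses that $T$ is supported on t-curves and that $E$ occurs in $T$ with coefficient exactly $m$ — and, for part (a), that $\tilde F$ really is finite at $x$, which rests on identifying the non-finite locus of $\tilde F$ with a finite union of contracted curves not equal to $E$. Once these points are in place, the rest is the routine threshold bookkeeping sketched above.
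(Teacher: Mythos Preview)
Your argument is correct and follows essentially the same approach as the paper: test the family $|w|^{t}$ against Lemma \ref{main} and compare the integrability thresholds $t>4$ on the target with $mt>p-2$ on the source. You are in fact more careful than the paper in justifying the choice of a generic point of $E$, the normal form $w\circ\tilde F=t^{m}h$, and the finiteness of $\tilde F$ at $x$ in case (a).
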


\begin{proof} 
At any point in $H_{\infty}$, the pole order of the measure is 6. Let $y$ be the local defining function of $H_{\infty}$, then the function $f=|y|^t$ is locally $L^1$ if and only if $t\geq 4$. 

Under the assumption of part (a),   the pull back  of $f$ is $\tilde{f}\approx |x|^{mt}$, where $x$ is the local defining function of $E$. So $\tilde{f}$ is locally $L^1$ if and only if $mt>p-2$. So by lemma \ref{main}, we see that $m=\frac{p-2}{4}$.

Under the assumption of part (b), let $q=\tilde{F}(E)$. When $t> 4$, the function $f=|y|^t$ is locally $L^1$ around $q$, so by lemma \ref{main} again, we get that $\tilde{f}=f\circ \tilde{F}\approx |x|^{mt}$ is locally $L^1$ along $E$. So when $t>4$, we have $mt>p-2$. We conclude that $m\geq \frac{p-2}{4}$.

\end{proof}

Consider the t-curves that are not contained in the support of $T$, namely those t-curves whose images under $\tilde{F}$ intersect $\C^2$. By lemma \ref{main}, the pole order of these t-curves must be $\leq 0$.
For these t-curves, we can also define the multiplicity as following:
\begin{itemize}
\item When $E_i$ is mapped onto a curve $C$, we can pull back a defining function of $C$ at a smooth point of $C$. Then we define the multiplicity of $E_i$ as the vanishing order of that function along $E_i$. It is easily seen that this definition does not depend on the choice of smooth point, nor on the choice of local defining function.
 \item  When $E_i$ is contracted to a point $p$, we can define the multiplicity of $E_i$ as the maximum of the vanishing order of the pull back of all functions that have multiplicity $1$ at $p$.
\end{itemize}

We can apply argument similar to the proof of theorem \ref{order1} to these curves, and get the following theorem.

\begin{theo}\label{order2}
Let $E$ be a t-curve that is not contained in the support of $T$, with pole order $p$ and multiplicity $m$.
\begin{itemize}
\item[a] If $\tilde{F}(E)$ is a curve, then  $m=\frac{2-p}{2}$.

\item[b] If $\tilde{F}(E)$ is a point, then $m\leq \frac{2-p}{2}$
\end{itemize}
\end{theo}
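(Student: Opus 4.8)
The plan is to transplant the proof of Theorem \ref{order1}, changing only the numerics. Two elementary inputs drive everything. First, if $g$ is a holomorphic function cutting out a smooth curve through a point $q\in\C^2_Y$, then $|g|^{t}$ is locally $L^1$ at $q$ with respect to the Lebesgue measure $\lambda$ (which is $d\mu_Y$ near $q$) if and only if $t>-2$. Second, by the pole-order computations of this section, near a generic point of a t-curve $E$ of pole order $p$ one has $d\mu_X\approx\lambda/|x|^{p}$ for $x$ a local defining function of $E$, so $|x|^{s}$ is $d\mu_X$-locally $L^1$ there if and only if $s-p>-2$. Recall also that $p\le 0$ in this situation (by Lemma \ref{main}), that $p$ is even, and that in both cases $m\ge 1$, because the pullback in question vanishes on $E$ (the image $\tilde F(E)$ is contained in the relevant zero set); thus $\frac{2-p}{2}$ is a positive integer and the asserted relations are consistent.

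For part (a), write $C=\tilde F(E)$; since $E\notin\supp T$, $C$ meets $\C^2_Y$. Because $C$ is a curve, $\tilde F|_E$ is nonconstant, hence finite onto its image, so for a generic smooth point $q\in C\cap\C^2_Y$ and a generic point $x_0\in E$ over $q$ the map $\tilde F$ is finite near $x_0$ (one avoids the finitely many curves contracted by $\tilde F$, the singular locus of $C$, and the finitely many zeros along $E$ of $(g\circ\tilde F)/x^{m}$). Let $g$ be a local defining function of $C$ at $q$ and let $x$ also denote a local defining function of $E$ at $x_0$; by the choice-independent definition of $m$ in this case, $g\circ\tilde F\approx x^{m}$ near $x_0$. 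Applying Lemma \ref{main}(a) to $f=|g|^{t}$ together with the two inputs above: for every real $t$, $t>-2$ if and only if $mt-p>-2$. Since $m\ge 1$ the latter condition reads $t>\frac{p-2}{m}$, and two half-lines of this form coincide only when $\frac{p-2}{m}=-2$, i.e.\ $m=\frac{2-p}{2}$.

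For part (b), write $q_0=\tilde F(E)\in\C^2_Y$ and let $h$ be \emph{any} holomorphic function of multiplicity $1$ at $q_0$, with $k\ge 1$ the vanishing order of $h\circ\tilde F$ along $E$. Since $\{h=0\}$ is smooth at $q_0$, the first input gives that $|h|^{t}$ is locally $L^1$ at $q_0$ for every $t>-2$; by Lemma \ref{main}(b), $|h\circ\tilde F|^{t}\approx|x|^{kt}$ is then $d\mu_X$-locally $L^1$ near a generic point of $E$, so $kt-p>-2$ for every $t>-2$. Letting $t\to -2^{+}$ gives $-2k-p\ge -2$, i.e.\ $k\le\frac{2-p}{2}$. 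Taking the supremum over all such $h$ yields $m\le\frac{2-p}{2}$ (and, incidentally, that this supremum is finite).

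The only points that need care are the two ``$\approx$'' assertions: $g\circ\tilde F$ (resp.\ $h\circ\tilde F$) factors as $x^{m}$ (resp.\ $x^{k}$) times a holomorphic function that is nonvanishing only at a \emph{generic} point of $E$, and $d\mu_X$ has principal part $|x|^{-p}$ only away from the intersections of $E$ with other t-curves --- but ``generic point of $E$'' is exactly the regime in which Lemma \ref{main} is being invoked, so this is harmless. I do not anticipate a genuine obstacle here: the mechanism is identical to that of Theorem \ref{order1}, with the integrability exponent $4$ at $H_\infty$ replaced by the exponent $-2$ at a point of $\C^2_Y$.
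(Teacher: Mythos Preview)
Your argument is correct and follows essentially the same route as the paper's proof: testing local $L^1$-integrability of $|g|^{t}$ (equivalently $1/|f|^{c}$ with $c=-t$) across $\tilde F$ via Lemma~\ref{main}, using the threshold $t>-2$ at a point of $\C^2_Y$ in place of the threshold $t>4$ at $H_\infty$ used in Theorem~\ref{order1}. Your write-up is more careful than the paper's terse ``basically same as'' proof---in particular you make explicit the genericity needed to ensure finiteness in part~(a) and the passage to the supremum in part~(b)---but the underlying mechanism is identical.
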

\begin{proof}
Basically same as that of theorem \ref{order1}. Let $f$ be any function that has multiplicity at $p$. Then $\frac{1}{|f|^c}$ is locally $L^1$ around $p$ if and only if $c<2$. Let $m_f$ be the vanishing order of the pull back of $f$ along $E$. Then as in the proof of theorem \ref{order1}, we get $m_f\leq \frac{2-p}{2}$. Hence taking the maximum, we get the theorem.

\end{proof}

 We associate to  each t-curve a vertex, to each intersection an edge, then we get a connected graph. We consider the strict transform $\tilde{L}$ of $L$ the root of this graph, and assign to each edge the "away from root" direction, then this graph become a rooted tree, which we denote by $G$. We adapt the terminologies such as child, parent from the graph theory. 

Given a vertex $E$ in $G$, by children of $E$, we mean the vertices whose paths to the root have to pass $E$, and by direct children  the children which are adjacent to $E$. Beside the root $\tilde{L}$, a vertex $E$ has two types of parents: one is the parent along the tree, which we call its t-parent. The other type of parents, which we call the b-parents, of $E$ are those t-curves that $E$ were blown-up during the process of blowing-ups. Here we are abusing notation by identifying curves with their strict transforms.

In order to understand the structure, we need to analyze the pole orders along the tree. 
\begin{defin}
We call a pair of integers $(a,b)$ a pole pair if there exists a pair of adjacent vertices $(E_1, E_2)$ on $G$, with $E_1$ the t-parent of $E_2$, such that $a$ is the pole order of $E_1$ , $b$ the pole order of $E_2$.

\end{defin}\label{pair}

\begin{lem}Let  $(a,b)$ be a pair of integers.
\begin{itemize}
\item[a] If $a\leq 2$ and $b\geq 2$, then it is not a pole pair.

\item[b] If $a\geq 2$ and $b\leq -2$, then it is not a pole pair.
\item[b'] If $a\geq 4$ and $b\leq 0$, then it is not a pole pair.
\item[c]  If $a=b\leq -2$, then it is not a pole pair.
\end{itemize}

\end{lem}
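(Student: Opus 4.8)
The statement is purely combinatorial: it depends only on how the rooted tree $G$ and the pole orders of t-curves are produced by the blow-up process $P:\widetilde{\CP^2}\to\CP^2$, via Theorem \ref{pole}. The plan is to prove all four assertions (a), (b), (b$'$), (c) at once, by induction on the number of blow-ups, by showing that every pole pair belongs to one of three ``allowed'' families which together form exactly the complement, among pairs of even integers, of the ``forbidden'' set cut out by (a)--(c).

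First I would record the elementary combinatorics of the process. Every blow-up center lies on $P^{-1}(H_{\infty})$, i.e. on a t-curve, so each blow-up is single or double in the sense of Section \ref{poleorders}. A single blow-up at a t-curve $E$ of pole order $a$ attaches a new leaf $E'$ to $E$ with pole order $a-2$ (Theorem \ref{pole}(a)); it creates exactly one new pole pair, namely $(a,a-2)$, and alters no existing one. A double blow-up at $E\cap E'$, where $E$ is the t-parent of $E'$, with pole orders $a$ and $b$ respectively, subdivides the edge joining $E$ and $E'$ by a new vertex $E''$ of pole order $c:=a+b-2$ (Theorem \ref{pole}(b)); it replaces the pole pair $(a,b)$ by the two new pole pairs $(a,c)$ and $(c,b)$ and alters no other pole pair. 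Recall also that all pole orders are even. Finally, I would reformulate ``not forbidden'': using that $x,y$ are even, a pair $(x,y)$ avoids (a), (b), (b$'$), (c) if and only if it is of one of the types (i) $x\le0$, $y\le0$, and not $x=y\le-2$; (ii) $(x,y)=(2,0)$; (iii) $x\ge4$, $y\ge2$. (Checking this equivalence is a short case split on whether $x\le0$, $x=2$, or $x\ge4$.)

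Now the induction. Before any blow-up there is no pole pair, and the first blow-up is forced to be a single blow-up of $L$, whose pole order is $6$, producing the pole pair $(6,4)$ of type (iii); in fact the first blow-up is itself an instance of the inductive step below. For the inductive step, suppose every pole pair present is of type (i), (ii), or (iii). A single blow-up produces $(a,a-2)$, which is of type (i), (ii), or (iii) according as $a\le0$, $a=2$, or $a\ge4$. A double blow-up acts on a pole pair $(a,b)$ which by hypothesis is of one of the three types; setting $c=a+b-2$, one checks: if $(a,b)$ is of type (iii) then $c\ge4$ and both $(a,c)$, $(c,b)$ are of type (iii); if $(a,b)$ is of type (ii) then $c=0$ and the new pairs are $(2,0)$ and $(0,0)$, of types (ii) and (i); if $(a,b)$ is of type (i) then $c\le-2$ and both $(a,c)$, $(c,b)$ are again of type (i), the inequalities $a\ne c$ and $c\ne b$ needed for type (i) following from $b\ne2$ and $a\ne2$. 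Hence every pole pair is always of an allowed type, and in particular satisfies none of (a), (b), (b$'$), (c).

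The individual verifications above are entirely routine; the only points needing care are the bookkeeping in the first step---establishing that these two operations are the only ways pole pairs arise and that each affects just a single pole pair---and pinning down the three-type description as the exact complement of the forbidden set. Once those are in place the lemma is immediate. I expect the bookkeeping to be the main obstacle, since it requires a clean description of how the tree $G$ evolves (edge-subdivision for double blow-ups, leaf-attachment for single blow-ups) together with the invariance of the pole order of a t-curve under further blow-ups centered off its generic locus.
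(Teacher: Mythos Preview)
Your proof is correct and complete; the bookkeeping you flag as the main concern is sound (single blow-ups attach a leaf, double blow-ups subdivide an edge, and neither touches any other pole pair), and your three-type description really is the exact complement of the forbidden set among pairs of even integers.

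The organization, however, differs from the paper's. The paper argues each of (a), (b), (b$'$) separately by \emph{descent}: it first observes that a pair satisfying the hypothesis cannot arise from a single blow-up, hence must come from a double blow-up of some predecessor pair $(c,d)$, and then checks that $(c,d)$ again satisfies the same hypothesis; since the initial pair $(6,4)$ does not, no such pair can ever appear. Part (c) is then deduced from (a) and (b). Your approach is the dual forward induction: you identify the precise invariant set (your types (i), (ii), (iii)) and verify it is closed under both operations, obtaining all four parts simultaneously. Your route is a bit more work up front---one must pin down the complement exactly and check every case of the inductive step---but it yields a sharper statement (a complete description of which pole pairs can occur, not merely which cannot) and avoids the interdependence of the parts that the paper uses for (c).
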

\begin{proof}
We need to look at the blow-up process to find blow-ups that possibly produce such pairs.

Under the assumption of each situation, if such a pair appear in $G$, it must come from a double blow-up. So in one intermediate step, there is a pair of intersecting t-curves $(E_1,E_2)$ with pole pair $(c,d)$. Let $p=E_0\cap E_1$, then we assume we get the pole pair $(a,b)$ by blowing up at $p$. By theorem \ref{pole}, we see that $(a,b)=(c, c+d-2)$ or $(a,b)=(c+d-2,d)$.

Now for part (a), if $(a,b)=(c, c+d-2)$, then $c\leq 2$ and $c+d-2\geq 2$, we get $d\geq 2$. If $(a,b)=(c+d-2,d)$, then $c+d-2\leq 2$ and $d\geq 2$ and we get $c\leq 2$. In both cases,  $(c,d )$ is already  a pole pair that satisfy the assumption of (a). But the start-up pair is $(6,4)$, so pairs satisfying the assumption can never be produced. And part (a) is proved. 

For part (b),  if $(a,b)=(c, c+d-2)$, then $c\geq 2$ and $c+d-2\leq -2$, we get $d\leq -2$. If $(a,b)=(c+d-2,d)$, then $c+d-2\geq 2$ and $d\leq -2$ and we get $c\geq 6$.  In both cases,  $(c,d )$ is already  a pole pair that satisfy the assumption of (b). But the start-up pair is $(6,4)$, so pairs satisfying the assumption can never be produced. And part (b) is proved. Part (b') can be proved in the same way.

For part (c),
 if $(a,a)=(c, c+d-2)$, then $c\leq -2$ and $d=2$, contradicting part (a). If $(a,a)=(c+d-2,d)$, then $d\leq -2$ and $c=2$, contradicting part (b). So part (c) is proved. 
\end{proof}

\section{proofs of the theorems}\label{proofs}

We will first prove theorem \ref{singularity}. Theorem \ref{auto} and its converse theorem \ref{converse} will then follow the same ideas.
\begin{proof}(theorem \ref{singularity})

Let $E_0$ be a t-curve whose image is a curve intersecting $\C^2$. By lemma \ref{main}, the pole order of $E_0$ must be $\leq 0$. Then by lemma \ref{pair}, all the children of $E_0$ have pole orders $\leq 0$.
We claim that the children of $E_0$ must contract to points in $\C^2$.

 To see this, let $E_1$ be a child, not necessarily direct, of $E_0$. Assume $C_1=\tilde{F}(E_1)$ is not a point, then $C_1$ intersects $H_{\infty}$ at, say, $A$. Let $p\in E_1$ satisfying $\tilde{F}(p)=A$, $q=C(p)\in Z$, where $C:\widetilde{\CP^2}\to Z$ is the birational morphism in the Stein Factorization. Consider the connected set $C^{-1}(q)$. If $C^{-1}(q)=p$, then the image of any small neighborhood of $p$ would contain an open neighborhood of $q$, hence its image under $\tilde{F}$ would contain a neighborhood of $A$. But a nonzero constant function is not integrable around $A$, but the pull back of this function will be integrable around $p$, contradicting lemma \ref{main}. So $C^{-1}(q)$ is a connected curve passing $p$, all of whose components are children of $E_0$. So the image of any small neighborhood of $C^{-1}(q)$ would contain an open neighborhood of $q$, hence its image under $\tilde{F}$ would contain an open neighborhood of $A$. Then again, we use nonzero constant function to get a contradiction. So the claim has been proved.
 
 We then consider the branches of $E_0$, forming subtrees, each of which has as its root one of the direct children of $E_0$. Notice that among the children of $E_0$, there may be a b-parent of $E_0$, in which case there maybe b-grandparent(great grandparent, etc.). But all these ancestors can only be contained in one subtree. So all other subtrees consist of b-children of $E_0$. Consider one of such subtrees, it must contract to one single point in $\C^2$ under $\tilde{F}$, since it is connected. Then we see that these subtrees are redundant. So if we assume that $\tilde{F}:\widetilde{\CP^n}\to \CP^n$ is a minimal resolution, namely, no $-1$ curve, except the strict transform of $L$, contracts to a point under $\tilde{F}$, then $E_0$ can only have one subtree, which contain a b-parent of $E_0$. Similar argument as before can be applied to get that this subtree contains no b-children of $E_0$. Then we argue again on the braches of the b-parent of $E_0$ which is a t-child of $E_0$ to get that it has at most one direct child. We can keep arguing like this to conclude that  $E_0$ together with its t-children form a line-like tree, namely each vertex has at most one direct child. Moreover, each vertex in this tree has one of its b-parent as its only direct t-child. Therefore the t-children contract to a point in $B$ with at most $\tilde{A}_n$ singularity. 
 
 Let $E_{-1}$ be the t-parent of $E_0$. Then by lemma \ref{pair}, the pole order of $E_{-1}\leq 2$. So $E_{-1}$ contracts to a point in $H_{\infty}$. So $E_0\backslash E_{-1}\cong \C$ maps injectively to $B$. And the image has at most one singularity, which must be a cusp.
 So we have proved theorem \ref{singularity}
\end{proof}
We now prove theorem \ref{auto}.

\begin{proof}(theorem \ref{auto})
By assumption, $F$ is an automorphism. So among all the t-curves, there is only one, called $E_f$, that is mapped to $\CP^2_Y\backslash \C^2_Y$ finitely by $\tilde{F}$. Moreover the multiplicity of $\tilde{F}$ along $E_f$ must be 1. Therefore the pole order of $E_f$ must be $6$. 

So  all other $E_i$ are contracted by $\tilde{F}$, hence having self-intersection number $\leq -2$. Therefore there must be exactly one stream of blow-ups, namely, each blow-up is centered in the exceptional curve of the immediate previous blow-up, otherwise we should have more than one curve with self-intersection number $-1$. So we can strictly order the indices of the $E_i$'s as asserted in the theorem. And then $E_f$ must have self-intersection number $-1$, hence being the exceptional curve produced in the last blow-up. So we have proved theorem \ref{auto}.

\end{proof}
\begin{rem}
It was proved in \cite{abhyankar}, the group of automorphisms of $\C^2$ is generated by the linear automorphisms and elementary non-linear automorphisms $F_n=(x+y^n,y)$ . So it is possible to prove theorem \ref{auto} by using this fact.

\end{rem}

\begin{proof}(theorem \ref{converse})
By the assumption of the theorem, the blow-ups which resolve $F$ are strictly ordered, and $E_m$ has pole order $p\geq 0$. If $p\geq 2$, according to lemma \ref{pair}, we see that every $E_i$ has pole order $\geq 2$, since once an $E_i$ with pole order $0$ is produced, we would not be able to get $E_j$ of pole order $\geq 2$ by further blow-ups. So by theorem \ref{order2}, every $E_i$ is contained in the support of $T$, implying that $F$ is a proper map, hence $F$ is an automorphism.

 Now we assume that $p=0$. Then by lemma \ref{pair} again, every $E_i$ has pole order $\geq 0$.
 
 Assume $F$ is not proper, then some $E_i$'s are mapped onto some curves other than $H_{\infty}\subset \CP^2_Y$. Then by theorem \ref{order2}, these $E_i$ are of pole order $\leq 0$. Therefore they are all of pole order  $0$. Then by theorem \ref{order1}, they are all of multiplicity $1$. This implies that $\tilde{F}|_B$ is not branched  over a generic point of any of these $E_i$'s. So  $\tilde{F}|_B: B\to \C^2_Y$ is branched over finite points $\{p_i\}_{1\leq i\leq n}$. By deleting these points from $C^2_Y$ and their pre-image from $B$, we get a covering map. But since $\C^2_Y\backslash\{p_i\}_{1\leq i\leq n}$ is simply connected, we see that $\tilde{F}$ is a birational map. But this also imply that $F$ is an automorphism \cite{bcw}. We have proved the theorem.
\end{proof}

\begin{proof}
(theorem \ref{piecewise})

We will consider the part of $E_i$ that is mapped to $B$ by $C$. But by theorem \ref{singularity}, that part is the whole $E_i$ except $1$ point. By abuse of notation, we will call this part also $E_i$. 

Now let $o$ be a point in $E_i$, at which we have local coordinates $(x,y)$ and $E_i$ is locally given by $y=0$. By the assumption of the theorem, we see that $\tilde{F}$ is finite at $o$, following theorem \ref{singularity}.

First, assume that the image of a local piece $E_{i, o}$ of $E_i$ around $o$ under $\tilde{F}$ is smooth in $\C^2_Y$. So we can choose local coordnates $(z,w)$ around $\tilde{F}(o)$ such that $\tilde{F}(E_{i, o})$ is defined by $z=0$. Let $m$ and $p$ be the multiplicity and pole order of $E_i$. Then $\tilde{F}$ is defined by $\tilde{F}(x,y)=(y^mg(x,y), f(x,y))$, where $y\nmid g$ and $f(0,0)=0$. 

\begin{claim}
$g(0,0)\neq 0$.
\end{claim}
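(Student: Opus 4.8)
The plan is to compute the order of vanishing of $\operatorname{Jac}\tilde{F}$ along $E_i$ near $o$ in two ways — once from the pole order of $E_i$, using that $\tilde{F}$ is measure-preserving, and once directly from the formula $\tilde{F}=(y^mg,f)$ — and to read off $g(0,0)\neq0$ by comparing them. First I set up the local picture. Since $\tilde{F}$ is finite at $o$, no t-curve other than $E_i$ passes through $o$: by theorem \ref{singularity} every child of $E_i$ is contracted by $\tilde{F}$, so a second t-curve through $o$ would produce a positive-dimensional fibre of $\tilde{F}$ over $\tilde{F}(o)$. Hence $o$ is a generic point of $E_i$, and by the pole-order computations of section \ref{poleorders} together with theorem \ref{order2}(a) (which gives pole order $p=2-2m$, in particular $m\geq1$) we may write, in the coordinates $(x,y)$,
\[
d\mu_X=u_1\,|y|^{2m-2}\,\lambda,\qquad u_1(0,0)\neq0,
\]
with $u_1$ a positive real-analytic function. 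Moreover $o$ lies in the part of $E_i$ that $C$ sends into $B$, so $\tilde{F}(o)=\pi(C(o))\in\C^2_Y$, and therefore $d\mu_Y=u_2\,\lambda$ near $\tilde{F}(o)$ with $u_2$ a unit.

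Next I transfer the relation between the measures into a statement about the Jacobian. On the punctured neighbourhood $\{y\neq0\}$ the ambient point lies in $\C^2_X$, where $\tilde{F}$ coincides (up to the blow-down isomorphism) with $F$; since the Jacobian of $F$ is a nonzero constant $c$, we have $\tilde{F}^*d\mu_Y=|c|^2\,d\mu_X$ there. Substituting the two normal forms above gives $|\operatorname{Jac}\tilde{F}|^2=(\text{positive unit})\cdot|y|^{2m-2}$ on $\{y\neq0\}$, hence on a full neighbourhood of $o$ by continuity; as $\operatorname{Jac}\tilde{F}$ is holomorphic, this forces
\[
\operatorname{Jac}\tilde{F}=v\,y^{m-1},\qquad v(0,0)\neq0,
\]
with $v$ a holomorphic unit.

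Finally, differentiating $\tilde{F}=(y^mg,f)$ directly,
\[
\operatorname{Jac}\tilde{F}=y^{m-1}\bigl(\,y(g_xf_y-g_yf_x)-m\,g\,f_x\,\bigr).
\]
Comparing with the previous display, the bracketed factor must be a holomorphic unit, so it does not vanish at $o$; its value there is $-m\,g(0,0)\,f_x(0,0)$, whence $g(0,0)\neq0$ (and, as a by-product, $f_x(0,0)\neq0$, which will be used afterwards).

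The delicate point in this argument is the first step: that the exponent of $|y|$ appearing in $d\mu_X$ at this specific $o$ is exactly $2m-2$, with no additional vanishing factor, i.e. that $o$ is genuinely a generic point of $E_i$. This is exactly where the hypothesis that $B$ is smooth along $C(E_i)$ enters — through theorem \ref{singularity} it guarantees finiteness of $\tilde{F}$ at $o$ and excludes any other t-curve through $o$ — and everything else is a routine local computation.
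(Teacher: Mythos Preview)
Your argument is correct, and it takes a genuinely different route from the paper's.

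The paper proves $g(0,0)\neq 0$ by an integrability (singularity-exponent) argument: it pulls back the test function $|zw|^{-2c}$, uses part (a) of lemma~\ref{main} to assert that the pullback is locally $L^1$ near $o$ iff $c<1$, and then shows that if $g(0,0)=0$ one could choose $c$ just below $1$ for which the pullback fails to be integrable (because the singularity exponent of $ygf$ would be strictly less than $1$). Only \emph{after} establishing $g(0,0)\neq 0$ does the paper factor $\tilde{F}=S_m\circ H$, prove $H$ is biholomorphic, and conclude $J_{\tilde{F}}\sim y^{m-1}$.

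You reverse this logic: you first derive $\operatorname{Jac}\tilde{F}=v\,y^{m-1}$ with $v$ a unit directly from the identity $\tilde{F}^*d\mu_Y=|c|^2\,d\mu_X$ and the pole-order formula $p=2-2m$, and then compare this with the explicit Jacobian of $(y^mg,f)$ to read off $g(0,0)\neq 0$. This is more elementary --- it avoids any appeal to singularity exponents --- and it yields $f_x(0,0)\neq 0$ for free, which the paper obtains only implicitly via the biholomorphicity of $H$. The one point that needs care in your version, which you handle, is verifying that $o$ is a genuinely generic point of $E_i$ (no second t-curve through $o$), so that $d\mu_X$ really has the form $u_1|y|^{2m-2}\lambda$ with $u_1$ a unit there; finiteness of $\tilde{F}$ at $o$ guarantees this. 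Both approaches ultimately rest on the same input (theorem~\ref{order2}(a) and the measure relation), but yours extracts $J_{\tilde{F}}\sim y^{m-1}$ as the starting point rather than as a corollary.
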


To see this, consider the integral $\int \frac{d\mu}{|zw|^{2c}}$. This converges if and only if $c<1$. So the integral $I=\int \frac{|y|^{-p}d\mu}{|y|^{2mc}|gf|^{2c}}$ converges if and only if $c<1$. By theorem \ref{order2}, $-p=2m-2$. So the integral 
$I=\int \frac{d\mu}{|y|^{2mc-2m+2}|gf|^2c}$. Assume $g(0,0)=0$, then the singularity exponent (we mention singularity exponent here without the definition just to interest the reader, the meaning will be explained right away) of $ygf$ must be less than $1$, namely there exists $c_0<1$ such that whenever $c>c_0$, we have $\int \frac{d\mu}{|ygf|^2c}$ divergent. So if we pick $0<\epsilon$ small enough, and let $c=1-\epsilon$ to make $2mc-2m+2>2c_0$ and $c>c_0$, we get a contradiction. And the claim is proved.

\smallskip

Now that $g(0,0)\neq 0$, we can take the $m-$th root. Let $h(x,y)$ be one branch of $g^{1/m}$. Then locally $\tilde{F}$ factor as $\tilde{F}=S_m\circ H$, where $H(x,y)=(yh,f)$, and $S_m(x,y)=(x^m,y)$. We claim that $H$ is  biholomorphic, namely that the Jacobian of $H$ at $o$ is non-zero. We can calculate the Jacobian directly:
$$J_H=\left[\begin{array}{rr}yh_x & g+yh_y\\f_x & f_y  \end{array}\right]=yf_yh_x-hf_x-yf_xh_y$$
Since the Jacobian of $\tilde{F}$ can not vanish along $E_i$, $J_H$ can vanish only along $(y=0)$. Assume $y|J_H$, then $y|hf_x$. But since $h(0,0)\neq 0$, we must have $y|f_x$. Since $f(0,0)=0$, we must then have $y|f$. So $E_i=(y=0)$ is contracted by $\tilde{F}$ to a point, a contradiction. So $H$ is biholomorphic. So after a change of coordinates $\tilde{F}$ is modeled as the standard $S_m(x,y)=(x^m,y)$. In particular, $\tilde{F}|_{E_i}$ has non-vanishing tangent map. We can also conclude that $J_{\tilde{F}}\sim y^{m-1}$.

Next assume $S=\tilde{F}(E_{i, o})$ is singular at $\tilde{o}$. So there are local coordinates such that $S$ is given by $z^a=w^b$ in a neighborhood $V$, with $a$ and $b$ being coprime and $a<b$. The normalization of $S$ is given by $\pi_0: U\to V$ with $\pi_0(t)=(t^b, t^a)$, where $U$ is a neighborhood of $0$ in $\C$.
The map $\tilde{F}|_{E_i}$ lifts to the normalization of $\tilde{F}(E_i)$, which in local coordinates can be written as $x\mapsto x^t$, where $t$ is a positive integer. So now $\tilde{F}|_{E_i}$ can be written as $x\mapsto (x^{tb}, x^{ta})$. Therefore $\tilde{F}(x,y)=(x^{tb}+f_1, x^{ta}+f_2)$, with $y|f_i$ for $i=1, 2$.
So we can calculate the Jacobian
$$J=J_{\tilde{F}}=\left[\begin{array}{rr}tbx^{tb-1}+f_{1,x} & f_{1,y}\\tax^{ta-1}+f_{2,x} & f_{2,y}  \end{array}\right]=tbx^{tb-1}f_{2,y}+f_{1,x}f_{2,y}-tax^{ta-1}f_{1,y}-f_{2,x}f_{1,y}$$
From the previous argument, we know that $J=y^{m-1}g(x,y)$ for some holomorphic $g$ satisfying $g(0,0)\neq 0$.
 
When $m=1$, we have $y|f_{i, x}$.  Now in order for the Taylor expansion of $J$ to have a constant term  $e$ with $e\neq 0$. We must have $ta-1=0$. The only possibility is  $t=1$ and $a=1$. But then $S=\tilde{F}(E_{i, o})$ is smooth. 

Now, we assume $m>1$. 
Assume that $y^{m-1}|f_{i,y}$ for $i=1, 2$. Then since $y|f_i$, we see that $y^m|f_i$ for $i=1, 2$. But then $y^m|f_{i,x}$, then we can not find a term of the form $ey^{m-1}$, with $e\neq 0$ constant, in the Taylor expansion of $J$, a contradiction. Then since we must have $y^{m-1}|(tbx^{tb-1}f_{2,y}-tax^{ta-1}f_{1,y})$, we have $\lambda=\deg_y f_1=\deg_y f_2<m-1$. Now we expand $f_{i,y}$ as Taylor series of $y$, and denote the coefficients by $c_{i,\lambda}(x)$, then we have $ac_{1,\lambda}=x^{t(b-a)}bc_{2,\lambda}$ for $\lambda<m-1$. Now in order for the Taylor expansion of $J$ to have a term of the form $ey^{m-1}$, with $e\neq 0$ constant, we must have $t(b-a)=1$. The only possibility is  $t=1$ and $b=a+1$. We have proved the part about the piecewise-singularity asserted in the theorem. 

Now globally, $\tilde{F}|_{E_i}$ lifts to the normalization of $\tilde{F}(E_i)$. We denote the lifted map by $G$. Then by what we have shown above, $G$ can not have critical point on $E_i\cap C^{-1}(B)$. This means that $G$ can have at most 1 critical point. But both $E_i$ and the image are copies of $\CP^1$. So $G$ must be a linear map $\CP^1\to \CP^1$. Therefore $\tilde{F}|_{E_i}$ is birational. We have proved the theorem. 

\end{proof}
\begin{rem}
The $c_0$ in the proof in fact \cite{DeKo} satisfies $c_0\leq \frac{2}{3}$ .

\end{rem}
\begin{example}
If we only consider the local case, the piecewise-singularity $x^p=y^{p+1}$ is actually possible. Consider the following map
$G(x,y)=(2x^3+xy, 3x^2+y)$. The Jacobian is $J=y$, corresponding to $m=2$. And the image of the curve $y=0$ is defined by $27x^2=4y^3$.

\end{example}

\begin{example} For elementary non-linear automorphsim of $\C^2$ of the form $F_n=(x+y^n,y)$, we can draw out the graph  with explicit ordering of the nodes as follows:

\begin{tikzpicture}
\matrix [column sep=7mm, row sep=5mm] {
  \node (l) [draw, shape=circle] {L}; &
  \node (e2) [draw, shape=circle] {$E_2$}; &
    \node (e3) [draw, shape=circle] {$E_3$}; & 
     \node (d) [draw, shape=circle] {$\cdots$}; &
    \node (en) [draw, shape=circle] {$E_n$}; &
     \node (e1) [draw, shape=circle] {$E_1$}; &
  \\
 & & & & 
  \node (en1) [draw, shape=circle] {$E_{n+1}$}; & \\
  & & & &   \node (dd) [draw, shape=circle] {\vdots}; &
    \\
   & & & &   \node (e2n) [draw, shape=circle] {$E_{2n-1}$}; 
   \\
};
\draw[thick] (l) -- (e2);
\draw[thick] (e2) -- (e3);
\draw[thick] (e3) -- (d);
\draw[thick] (d) -- (en);
\draw[thick] (en) -- (e1);
\draw[thick] (en) -- (en1);
\draw[thick] (en1) -- (dd);
\draw[ thick] (dd) -- (e2n);
 \node[below right] at (current bounding box.south west){\textbf{Graph 2:}$ F=(x+y^n,y)$};
\end{tikzpicture}

\end{example}

\bibliography{jacobian} 
\bibliographystyle{plain}

\end{document}